\documentclass{article}
\usepackage[utf8]{inputenc}
\usepackage{babel}
\usepackage{amsmath,amsthm,amssymb}
\usepackage[shortlabels]{enumitem}
\usepackage{hyperref,cleveref}
\usepackage{accents}
\usepackage{xcolor}

\usepackage{fullpage}

\newcommand{\bivec}[1]{\accentset{\leftrightarrow}{#1}}

\title{Complete minors in digraphs with given dichromatic number}
\author
{
Tam\'{a}s M\'{e}sz\'{a}ros\thanks{Institut für Mathematik, Freie Universit\"{a}t Berlin, Germany. Research supported by Deutsche Forschungsgemeinschaft (DFG, German Research Foundation) under Germany’s Excellence Strategy - The Berlin Mathematics Research Center MATH+ (EXC-2046/1, project ID: 390685689). Email: \texttt{tmeszaros87@gmail.com}.}
\and
Raphael Steiner\thanks{Institut für Mathematik, Technische Universit\"at Berlin, Germany. Funded by DFG-GRK 2434 Facets of Complexity. Email: \texttt{steiner@math.tu-berlin.de}.}
}
\date{\today}

\newtheorem{thm}{Theorem}
\newtheorem{corollary}{Corollary}
\newtheorem{problem}{Problem}
\newtheorem{prop}{Proposition}

\begin{document}

\maketitle

\begin{abstract}
The \emph{dichromatic number} $\vec{\chi}(D)$ of a digraph $D$ is the smallest $k$ for which it admits a $k$-coloring where every color class induces an acyclic subgraph. 
Inspired by Hadwiger's conjecture for undirected graphs, se\-ve\-ral groups of authors have recently studied the containment of directed graph minors in digraphs with given dichromatic number. In this short note we improve several of the existing bounds 
and prove almost linear bounds by reducing the problem to a recent result of Postle on Hadwiger's conjecture.

\end{abstract}

\section{Introduction}


For a given integer $t\geq 1$ let $m_\chi(t)$ be the least integer for which it is true that every graph with chromatic number at least $m_\chi(t)$ contains a $K_t$-minor. Hadwiger's conjecture~\cite{H43}, which is one of the most important open problems in graph theory, states that $m_\chi(t)=t$ for all $t \ge 1$. The conjecture remains unsolved for $t \ge 7$. For many years, the best general upper bound on $m_\chi(t)$ was due to Kostochka~\cite{K82,K84} and Thomason~\cite{T84}, who independently proved that every graph of average degree at least $O(t\sqrt{\log t})$ contains a $K_t$-minor, implying that $m_\chi(t)=O(t\sqrt{\log t})$. Recently, however, there has been progress. First, Norine, Postle and Song~\cite{NPS19} showed that $m_\chi(t)=O\left(t(\log t)^\beta\right)$ (for any $\beta > \frac{1}{4}$), and then this was further improved by Postle~\cite{P20} to give $m_\chi(t)=O\left(t(\log\log t)^{6}\right)$. For more details about Hadwiger's conjecture the interested reader may consult the recent survey of Seymour~\cite{S16}.

This famous conjecture has influenced many researchers and different va\-ri\-a\-tions of it have been studied in various frameworks, one of which is directed graphs. In this case there are multiple ways to define a minor. Here we consider three popular variants: \emph{strong minors}, \emph{butterfly minors} and \emph{topological minors}. The containment of these different minors in dense digraphs as well as their relation to the dichromatic number have already been studied in several previous works, see e.g. \cite{AGSW20, KS15, J96} for strong minors, \cite{G20, KK15,J01,MSW19} for butterfly minors and \cite{A19, GSSz20, G21, M85, M95, M96, S00} for topological minors. 

\smallskip

Given digraphs $D$ and $H$, we say that $D$ is a \emph{strong $H$-minor model} if $V(D)$ can be partitioned into non-empty sets $\{X_v : v\in V(H)\}$ (called \emph{branch sets}) such that the digraph induced by $X_v$ is strongly-connected for all $v\in H$; and for every arc $(u,v)$ in $H$ there is an arc in $D$ from $X_u$ to $X_v$. More generally, we also say that $D$ \emph{contains $H$ as a strong minor} and write $D \succcurlyeq_s H$ if a subdigraph of $D$ is a strong $H$-minor model. Pause to note that strong minor containment defines a transitive relation on digraphs, that is, if $D_1 \succcurlyeq_s D_2$ and $D_2 \succcurlyeq_s D_3$ for digraphs $D_1,D_2,D_3$, then $D_1\succcurlyeq_s D_3$.

Given an undirected graph $G$ we denote by $\bivec{G}$ the directed graph with the same vertex set and for every edge $uv\in E(G)$ the vertices $u$ and $v$ are connected in $\bivec{G}$ by an arc in each direction. 
We will be particularly interested in forcing strong $\bivec{K}_t$-minors, as those also yield a strong $H$-minor for every digraph $H$ on at most $t$ vertices. Analogously to the undirected case, one can ask how large the dichromatic number of a digraph should be to guarantee that it contains a strong $\bivec{K}_t$ minor.
More precisely, we consider the function $sm_{\vec{\chi}}(t)$, which is the least integer for which it is true that every digraph $D$ with $\vec{\chi}(D) \ge sm_{\vec{\chi}}(t)$ satisfies $D\succcurlyeq_s \bivec{K}_t$. In a recent work, Axenovich, Gir\~{a}o, Snyder and Weber~\cite{AGSW20} showed that $sm_{\vec{\chi}}(t)$ exists for every $t \ge 1$ and proved the bounds
\begin{equation*}
    t+1\leq sm_{\vec{\chi}}(t)\leq t4^t.
\end{equation*}
Here we improve their upper bound substantially by reducing the problem to the undirected setting. 

\begin{thm}\label{thm:cor}
For every $t\geq 1$ we have 
\begin{equation*}
 sm_{\vec{\chi}}(t)\le 2m_\chi(t)-1.    
\end{equation*}
\end{thm}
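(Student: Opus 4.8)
The plan is to reduce the directed problem to the undirected one via an appropriately chosen auxiliary undirected graph. Given a digraph $D$ with $\vec{\chi}(D)\ge 2m_\chi(t)-1$, I would first consider the underlying undirected graph $U(D)$ obtained by forgetting orientations and identifying parallel edges, and within it the spanning subgraph $G$ consisting only of the \emph{digons} of $D$, i.e. those edges $uv$ such that both $(u,v)$ and $(v,u)$ are arcs of $D$. The key observation to establish is that $\chi(G)\ge \vec{\chi}(D)/2$: indeed, if $G$ had a proper coloring with fewer than $\vec{\chi}(D)/2$ colors, then pairing up color classes would yield a coloring of $D$ with fewer than $\vec{\chi}(D)$ parts, each of which is a union of two digon-free sets; such a set induces no digon and hence no directed cycle of length two, but we need acyclicity, so I would need to be slightly more careful.

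The fix is standard: a set $S\subseteq V(D)$ inducing no digon may still contain longer directed cycles, so merely forbidding digons within color classes is not enough. Instead I would use the following: if $S$ is an independent set in $G$ (digon-free in $D$), then $D[S]$ is an \emph{oriented} graph, but oriented graphs can have large dichromatic number. So the naive pairing fails, and the right statement must be different. Let me reconsider: the correct reduction should observe that $\vec{\chi}(D)\le 2\chi(G)$ where $G$ is the digon graph is simply false in general. The actual approach in the literature (and surely here) is the reverse inequality direction combined with a minor-transfer: one shows that if $D\not\succcurlyeq_s\bivec{K}_t$ then $\vec{\chi}(D)$ is bounded, by finding a large-chromatic undirected minor.

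So here is the corrected plan. Suppose $D\not\succcurlyeq_s \bivec{K}_t$. Let $G$ be the undirected graph on $V(D)$ whose edges are the digons of $D$. I claim $G$ has no $K_t$-minor: if $G$ had a $K_t$-minor with branch sets $X_1,\dots,X_t$, then since each $X_i$ is connected in $G$ it induces a strongly connected subdigraph of $D$ (digons make any connected digon-subgraph strongly connected), and between any $X_i,X_j$ there is a digon, hence arcs in both directions; this is exactly a strong $\bivec{K}_t$-minor model, contradiction. Therefore by definition of $m_\chi$ we get $\chi(G)\le m_\chi(t)-1$. The final step is to bound $\vec{\chi}(D)$ in terms of $\chi(G)$: take a proper coloring of $G$ with $c\le m_\chi(t)-1$ colors; I claim this is \emph{almost} a good dichromatic coloring, but a monochromatic class $C$ induces a digon-free digraph, which need not be acyclic. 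To repair this, refine: within each class $C$, the digraph $D[C]$ is an oriented graph; oriented graphs need not have bounded dichromatic number either, so this still does not close.

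Hence the genuinely correct route, which I expect the authors take, must replace ``digon graph'' by a cleverer construction so that color classes become acyclic, not merely digon-free --- the natural candidate is to take $G$ to be the \emph{underlying graph} $U(D)$ itself. Then a strong $\bivec{K}_t$-minor in $D$ is implied by (though not equivalent to) a $K_t$-minor in $U(D)$ \textbf{only if one can orient the branch-set edges consistently}, which is where the factor $2$ enters: I would $2$-color... Concretely, I expect the argument to run: if $\vec{\chi}(D)\ge 2m_\chi(t)-1$ then $\chi(U(D))\ge 2m_\chi(t)-1$ (since any proper coloring of $U(D)$ is a dichromatic coloring of $D$), so $U(D)$ has a $K_{2m_\chi(t)-1 \,\to\, \text{something}}$... more precisely $U(D)$ contains a $K_{s}$-minor for $s$ with $m_\chi(s)\le 2m_\chi(t)-1$; the main lemma to prove is then that a $K_{2t-1}$-minor in $U(D)$ forces a strong $\bivec{K}_t$-minor in $D$, by a pigeonhole/splitting argument on the branch sets of the undirected minor --- each large undirected branch set can be split according to which of its ``boundary'' edges point which way, and among $2t-1$ branch sets one finds $t$ that pairwise connect via digons or via consistently-orientable paths. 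The main obstacle is precisely this combinatorial splitting lemma: turning $2t-1$ undirected branch sets into $t$ strongly connected directed branch sets with bidirectional connections, which I anticipate uses that a connected undirected graph, together with a spanning structure, can be partitioned or contracted to yield a strongly connected orientation only after discarding roughly half the pieces.
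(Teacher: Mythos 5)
Your proposal correctly identifies the overall shape of the argument (construct an auxiliary undirected graph $G$ with $D \succcurlyeq_s \bivec{G}$ and $\vec{\chi}(D) \le 2\chi(G)$, then transfer the $K_t$-minor from $G$ to $D$), and you correctly diagnose why the naive digon-graph choice of $G$ fails: a digon-free color class is merely an oriented graph, not an acyclic one. But the gap is not closed, and the alternative you sketch in the final paragraph does not work.

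The ``main lemma'' you propose --- that a $K_{2t-1}$-minor in the underlying graph $U(D)$ forces a strong $\bivec{K}_t$-minor in $D$ --- is false. Take $D$ to be the transitive tournament on $2t-1$ vertices: then $U(D) = K_{2t-1}$, but $D$ is acyclic and therefore contains no strong $\bivec{K}_2$-minor model (any branch set with more than one vertex would have to induce a directed cycle, and two singleton branch sets would need a digon between them). No pigeonhole or splitting argument on undirected branch sets can conjure strong connectivity out of an acyclic digraph. In addition, the intermediate step ``if $\chi(U(D)) \ge 2m_\chi(t)-1$ then $U(D)$ has a $K_{2t-1}$-minor'' implicitly assumes Hadwiger's conjecture; from the definition of $m_\chi$ one only gets that $U(D)$ has a $K_s$-minor for the largest $s$ with $m_\chi(s) \le \chi(U(D))$, which could be much smaller than $2t-1$.

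The missing idea, which is the heart of the paper's proof, is to let the vertices of $G$ be not vertices of $D$ but carefully chosen subsets of $V(D)$: greedily partition $V(D)$ into $X_1,\dots,X_m$, where each $X_i$ is an inclusion-maximal subset of the remaining vertices with $D[X_i]$ strongly connected and $\vec{\chi}(D[X_i]) \le 2$. Put $X_iX_j \in E(G)$ exactly when there are arcs in both directions between $X_i$ and $X_j$. Property (i) is then immediate by taking the $X_i$ as branch sets. Property (ii) is proved by coloring each $X_i$ with two fresh colors depending on $f_G(X_i)$, and showing that a monochromatic directed cycle $C$ would allow one to extend some $X_{i_0}$ by absorbing an initial segment of $C$ while keeping strong connectivity and $2$-dichromaticity, contradicting maximality. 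That maximality argument --- not any manipulation of $U(D)$ --- is the crucial step, and it is what your proposal does not supply.
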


By combining \Cref{thm:cor} with the aforementioned result of Postle we get that $sm_{\vec{\chi}}(t)= O\left(t(\log\log t)^{6}\right)$.   
%

\smallskip  

Now let us turn to butterfly minors. Given a digraph $D$ and an arc $(u,v) \in A(D)$, this arc is called \emph{(butterfly-)contractible} if $v$ is the only out-neighbor of $u$ or if $u$ is the only in-neighbor of $v$ in $D$. Given such a contractible arc $e$, the digraph $D/e$ 
is obtained from $D$ by merging $u$ and $v$ into a common vertex and joining their in- and out-neighborhoods, ignoring parallel arcs.
A \emph{butterfly minor} of a digraph $D$ is any digraph that can be obtained by repeatedly deleting arcs, deleting vertices or contracting arcs.

In \cite{MSW19}, inspired by Hadwiger's conjecture, Millani, Steiner and Wiederrecht raised the question that for a given integer $k\geq 1$, what is the largest butterfly minor closed class $\mathcal{D}_k$ of $k$-colorable digraphs, and they gave a precise characterization of $\mathcal{D}_2$ as \emph{non-even digraphs}.
The question concerning a characterization of $\mathcal{D}_k$ for $k \ge 3$ is closely related to the question of forcing complete butterfly minors in digraphs. For an integer $t \ge 1$, let us define $bm_{\vec{\chi}}(t)$ as the least integer such that every digraph $D$ with $\vec{\chi}(D) \ge bm_{\vec{\chi}}(t)$ contains $\bivec{K}_t$ as a butterfly minor, and put 
\begin{equation*}
    b(x):=\max\left\{t \ge 1\ \mid \ bm_{\vec{\chi}}(t) \le x\right\}
\end{equation*}
for the integer inverse function of $bm_{\vec{\chi}}(\cdot)$. Let us further denote by $\mathcal{K}_t$ the class of all digraphs with no $\bivec{K}_t$ as a butterfly minor. Then, on the one hand, every digraph excluding $\bivec{K}_{b(k+1)}$ as a butterfly minor is colourable with $bm_{\vec{\chi}}(b(k+1))-1 \le k$ colours. On the other hand, every digraph in $\mathcal{D}_k$ must exclude $\bivec{K}_{k+1}$ as a butterfly minor, since its dichromatic number exceeds $k$. Therefore, for every $k$ we have 
\begin{equation*}
    \mathcal{K}_{b(k+1)} \subseteq  \mathcal{D}_k  \subseteq \mathcal{K}_{k+1}.
\end{equation*}
To see how tight the the above inclusions are one needs to obtain good lower bounds on $b(k+1)$, or equivalently good upper bounds on $bm_{\vec{\chi}}(t)$. In this direction, as an app\-li\-cation of \Cref{thm:cor} we prove the following corollary. 



\begin{corollary}\label{cor:but}
For $t \ge 1$ we have $bm_{\vec{\chi}}(t) \le 2m_\chi(2t)-1= O(t(\log\log t)^{6})$.
\end{corollary}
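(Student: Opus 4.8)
The plan is to apply \Cref{thm:cor} with parameter $2t$ to obtain a strong $\bivec{K}_{2t}$-minor in $D$, and then to convert this strong minor into a butterfly $\bivec{K}_t$-minor by a pairing argument: each vertex of $\bivec{K}_t$ will be built from a \emph{pair} of strongly connected branch sets of the strong minor model, one serving as an ``out-part'' and the other as an ``in-part''. The loss of a factor of $2$ (equivalently, the appearance of $2m_\chi(2t)$ rather than $2m_\chi(t)$) is exactly the price of the rigidity of butterfly-minor models, in which the out-arborescence and the in-arborescence at a branch vertex are allowed to share only their common root.

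I would first recall the standard model description of butterfly minors. A digraph $D$ contains $\bivec{K}_t$ as a butterfly minor precisely when there are pairwise vertex-disjoint subdigraphs $B_1,\dots,B_t$ of $D$, each of the form $B_i=T_i^-\cup T_i^+$ where $T_i^-$ is an in-arborescence and $T_i^+$ an out-arborescence, both rooted at a common vertex $r_i$ and otherwise vertex-disjoint, together with pairwise distinct arcs $f_{ij}$ for $i\neq j$, none lying in any $B_w$, such that $f_{ij}$ runs from $V(T_i^+)$ to $V(T_j^-)$. (If one prefers to avoid invoking this characterization, the very same data gives an explicit sequence of vertex/arc deletions and butterfly contractions: after deleting everything outside $\bigcup_w B_w\cup\{f_{ij}:i\neq j\}$, one contracts each $B_i$ onto $r_i$ by repeatedly contracting a leaf arc, which is legitimate because in this subdigraph every non-root vertex of $T_i^+$ has in-degree $1$ and every non-root vertex of $T_i^-$ has out-degree $1$, and one checks that this property is maintained throughout the process.)

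Now suppose $\vec{\chi}(D)\ge 2m_\chi(2t)-1$. By \Cref{thm:cor} applied to the integer $2t$ we get $D\succcurlyeq_s\bivec{K}_{2t}$, so $D$ has a strong $\bivec{K}_{2t}$-minor model with branch sets $X_1,\dots,X_{2t}$: each $D[X_k]$ is strongly connected, and for all $k\neq\ell$ there is an arc from $X_k$ to $X_\ell$ and one from $X_\ell$ to $X_k$. Pair the branch sets as $(X_1,X_2),(X_3,X_4),\dots,(X_{2t-1},X_{2t})$. For the $i$-th pair, fix an arc $(r_i,p_i)$ from $X_{2i}$ to $X_{2i-1}$, so $r_i\in X_{2i}$ and $p_i\in X_{2i-1}$; let $T_i^-$ be a spanning in-arborescence of $D[X_{2i}]$ rooted at $r_i$, and let $T_i^+$ be the arc $(r_i,p_i)$ together with a spanning out-arborescence of $D[X_{2i-1}]$ rooted at $p_i$. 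Both exist by strong connectivity, $T_i^-$ and $T_i^+$ are rooted at $r_i$ and meet only there since $X_{2i-1}\cap X_{2i}=\emptyset$, so $B_i:=T_i^-\cup T_i^+$ is a valid branch subdigraph with $V(T_i^-)=X_{2i}$ and $V(T_i^+)=\{r_i\}\cup X_{2i-1}$, and the $B_i$ are pairwise vertex-disjoint.

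Finally, for $i\neq j$ pick $f_{ij}$ to be any arc from $X_{2i-1}$ to $X_{2j}$, which exists because $(2i-1,2j)\in A(\bivec{K}_{2t})$. Its tail lies in $X_{2i-1}\subseteq V(T_i^+)$ and its head in $X_{2j}=V(T_j^-)$; these arcs are pairwise distinct (distinct tail-blocks when $i$ varies, distinct head-blocks when $j$ varies) and none lies inside any $B_w$, since each runs between two different pairs. Hence $B_1,\dots,B_t$ together with the $f_{ij}$ form a butterfly $\bivec{K}_t$-minor model, which gives $bm_{\vec{\chi}}(t)\le 2m_\chi(2t)-1$. Combining with Postle's bound $m_\chi(s)=O\!\left(s(\log\log s)^{6}\right)$ and $\log\log(2t)=O(\log\log t)$ yields $2m_\chi(2t)-1=O\!\left(t(\log\log t)^{6}\right)$, as claimed. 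The only point requiring care is the interplay between the strong minor model and the butterfly model: because the out- and in-parts of a butterfly branch vertex may overlap only in the root, a single strongly connected branch set cannot in general be split into the required two arborescences while still providing the linking arcs, which is precisely why one devotes a full pair of branch sets to each vertex of $\bivec{K}_t$ and invokes \Cref{thm:cor} at $2t$.
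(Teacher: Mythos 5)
Your proof is correct and takes essentially the same route as the paper: both apply \Cref{thm:cor} at $2t$ and then convert the strong $\bivec{K}_{2t}$-minor model into a butterfly $\bivec{K}_t$-minor by pairing the branch sets, placing an in-arborescence and an out-arborescence on the two halves of each pair, joining them at a root via an arc between the pair, and using the plus-to-minus arcs as linking arcs. The only cosmetic difference is that you invoke a ``model'' characterization of butterfly minors, whereas the paper directly exhibits the subdigraph $D'$, checks that the arborescence arcs are butterfly-contractible, and contracts them.
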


For the sake of completeness we remark that a lower bound of $t+1\le bm_{\vec{\chi}}(t)$ follows by taking $D=\bivec{G}$ where $G$ is the complete graph on $t+2$ vertices with a $5$-cycle removed. It is a simple exercise to verify that $\bivec{\chi}(D)=t$ but it contains no butterfly $\bivec{K}_t$-minor.

\smallskip

Finally, we consider topological minors. Given a digraph $H$, a \emph{subdivision of $H$} is any digraph obtained by replacing every arc $(u,v)\in A(H)$ by a directed path from $u$ to $v$, such that subdivision-paths of different arcs are internally vertex-disjoint. Then $H$ is said to be a \emph{topological minor} of some digraph $D$ if $D$ contains a subdivision of $H$ as a subgraph.

Aboulker, Cohen, Havet, Lochet, Moura and Thomassé~\cite{A19} initiated the study of the existence of various subdivisions in digraphs of large dichromatic number. For a digraph $H$ they introduced the parameter $\text{mader}_{\vec{\chi}}(H)$, the \emph{dichromatic Mader number of $H$}, as the least integer such that any digraph $D$ with $\vec{\chi}(D)\geq \text{mader}_{\vec{\chi}}(H)$ contains a subdivision of $H$. In their main result they proved that if $H$ is a digraph with $n$ vertices and $m$ arcs, then 
\begin{equation*}
    n\leq \text{mader}_{\vec{\chi}}(H)\leq 4^m(n-1)+1.
\end{equation*}
Gishboliner, Steiner and Szabó~\cite{GSSz20} conjectured that $\text{mader}_{\vec{\chi}}(\bivec{K}_t)\le Ct^2$ for some absolute constant $C$, however, it seems surprisingly hard to find a polynomial upper bound even for quite simple digraphs $H$. An indication for this increased difficulty compared to the undirected case could be that for digraphs it is not even possible to force a $\bivec{K}_3$-subdivision by means of large minimum out- and in-degree (compare~\cite{M85}).
In \cite{GSSz20} the authors still managed to identify a wide class of graphs, called octus graphs\footnote{We note that this class, in particular, includes orientations of cactus graphs (and hence orientations of cycles), as well as bioriented forests.}, for which the lower bound is tight. Their result means that given a digraph $D$ with $\vec{\chi}(D)\geq n$ it contains the subdivision of every octus graph on at most $n$ vertices.

Here, along the same line of thinking, as a corollary of \Cref{thm:cor} we prove a similar result for another class of digraphs. By slightly abusing the terminology, we call a digraph $D$ \emph{subcubic} if $D$ is an orientation of a graph with maximum degree at most three such that the in- and out-degree of any vertex is at most two.

\begin{corollary}\label{cor:top}
For $n\ge 1$ if $D$ is a digraph with $\vec{\chi}(D) \ge 22n$ then it contains a subdivision of every subcubic digraph on at most $n$ vertices.
\end{corollary}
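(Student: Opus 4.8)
The plan is to deduce this from Theorem~\ref{thm:cor} by routing the required subdivision inside a strong complete bidirected minor. First some bookkeeping. A subcubic digraph $H$ on at most $n$ vertices has at most $2n$ arcs (each out-degree is at most $2$) and at most $n$ digons (each vertex lies in at most $\min(d^+,d^-)\le 2$ of them). Subdividing one arc of each digon once gives a digraph $H'$ on at most $2n$ vertices that is still subcubic but now has \emph{no digons}, and a subdivision of $H'$ is a subdivision of $H$. Since every arc of $H'$ is an arc of $\bivec{K}_{2n}$ and $|V(H')|\le 2n$, we have $\bivec{K}_{2n}\succcurlyeq_s H'$ (put the vertices of $H'$ in singleton branch sets and the remaining vertices of $\bivec{K}_{2n}$ into one of them), so by transitivity of $\succcurlyeq_s$ it suffices to force $D\succcurlyeq_s\bivec{K}_{2n}$ together with the observation that a strong $H'$-minor already yields a subdivision of $H'$ (see below). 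By Theorem~\ref{thm:cor} the former holds as soon as $\vec{\chi}(D)\ge 2m_\chi(2n)-1$, and the stated bound $22n$ then follows by a short estimate, the point being that one only has to destroy the digons before appealing to Theorem~\ref{thm:cor}.

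The substance of the argument is turning a strong $H'$-minor model into an actual subdivision. Let $\{X_v:v\in V(H')\}$ be such a model inside $D$, so each $D[X_v]$ is strongly connected and for every arc $(u,v)$ of $H'$ there is an arc $a_{uv}$ from $X_u$ to $X_v$; write $p_{v,e}$ for the endpoint in $X_v$ of the arc $a_e$ associated with an arc $e$ of $H'$ incident with $v$. A vertex $v$ of $H'$ has at most two such ``out-ports'' (tails of $a_e$, $e$ an out-arc) and at most two ``in-ports'' (heads of $a_e$, $e$ an in-arc), hence at most three ports in all. I would then invoke the following lemma: in any strongly connected digraph, given at most two designated out-ports and at most two designated in-ports (at most three in total), there is a \emph{center} vertex $x$ and directed paths from $x$ to each distinct out-port and from each distinct in-port to $x$ that pairwise meet only in $x$. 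Choosing such an $x_v$ and such paths inside each $X_v$, one realizes the arc $(u,v)$ of $H'$ by concatenating the $X_u$-path from $x_u$ to $p_{u,(u,v)}$, the arc $a_{uv}$, and the $X_v$-path from $p_{v,(u,v)}$ to $x_v$. That this produces a subdivision of $H'$ with branch vertices $\{x_v\}$ is where the absence of digons in $H'$ is used: it guarantees that no two branch sets are joined by arcs of $H'$ in both directions, which is the only way two of the constructed paths could meet outside $\{x_u,x_v\}$; coincidences among the $\le 3$ ports of one branch set are absorbed by taking the center to be the point of coincidence. The lemma itself is easy once one reads everything off a single arborescence: in the case of two out-ports $q_1,q_2$ and one in-port $r$ (the other case is symmetric under reversing all arcs, and fewer ports is easier), take a breadth-first out-arborescence rooted at $r$, let $z$ be the last common vertex of the tree-paths to $q_1$ and to $q_2$, set $x:=z$, and use the three tree-paths $z\to q_1$, $z\to q_2$, $r\to z$, which lie in a common subtree and so are internally disjoint; the degenerate cases $z=r$, $z\in\{q_1,q_2\}$ and $q_1=q_2$ are all harmless.

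The step I expect to be the real obstacle is not the spider lemma but the disjointness bookkeeping for the routing: the ports of a branch set are \emph{prescribed} (there may be only one arc between a given ordered pair of branch sets), so one has no freedom there and every coincidence must be pushed into the choice of center; checking that the digon-free structure of $H'$ then leaves no residual conflict — in particular between the two subdivision paths corresponding to a pair of arcs of $H'$ that share a vertex — is the fiddly part, and squeezing the constant down to $22$ is then just a matter of being economical about which auxiliary arcs one subdivides before applying Theorem~\ref{thm:cor}.
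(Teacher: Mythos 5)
The substantive half of your argument --- converting a strong minor model of a digon-free subcubic digraph into an actual subdivision by building a directed ``spider'' inside each branch set --- is sound and is essentially what the paper does: the paper's treatment of the degree-three case, taking the branch vertex $b(u)$ to be the first meeting point of two paths leaving the in-port $v_1$, is your arborescence lemma in slightly different clothing. You also need not preprocess digons: the paper's definition of a subcubic digraph is an \emph{orientation} of a graph of maximum degree three, so there are no digons to subdivide away.

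The genuine gap lies in how you obtain the strong minor model, and it is fatal to the claimed constant. You propose to force $D\succcurlyeq_s\bivec{K}_{2n}$ via Theorem~\ref{thm:cor}, which requires $\vec{\chi}(D)\ge 2m_\chi(2n)-1$, and then assert that ``the stated bound $22n$ then follows by a short estimate.'' It does not: $m_\chi$ is not known to be linear --- the best available bound is Postle's $m_\chi(t)=O\left(t(\log\log t)^6\right)$, and a linear bound on $m_\chi$ would prove Hadwiger's conjecture up to a constant factor. So your route only yields a bound of order $n(\log\log n)^6$, not $22n$. The paper sidesteps this by never going through a complete bidirected minor: it applies Theorem~\ref{thm:main} directly to get an undirected $G$ with $\chi(G)\ge 11n$ and $D\succcurlyeq_s\bivec{G}$, and then invokes the theorem of Reed and Wood that a graph of average degree at least $n+6.291m$ contains every $n$-vertex, $m$-edge graph as a minor. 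Because the underlying undirected graph $H$ of $F$ is sparse, with at most $\frac{3}{2}n$ edges, a \emph{linear} minimum degree (here $10.5n<11n-1$) already forces an $H$-minor in $G$, hence a strong $\bivec{H}$-minor in $\bivec{G}$ and thus a strong $F$-minor in $D$. That sparse-minor extremal theorem is the ingredient your proof is missing; routing through $\bivec{K}_{2n}$ is simply too expensive and cannot give a linear bound with current knowledge.
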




\paragraph{Notation.}

For a digraph $D$ and a set $S\subseteq V(D)$ we denote by $D[S]$ the subdigraph spanned by the vertices in $S$. The set $S$ is called \emph{acyclic} if $D[S]$ is an acyclic digraph.
We call $D$ \emph{strongly-connected} if for every ordered pair $u,v$ of vertices in $D$ there is a directed path in $D$ from $u$ to $v$.  
An in-/out-arborescence is a rooted directed tree where every arc is directed towards/away from the root. For the starting/ending point of an arc we will also use the names tail/head.

A \emph{(proper) coloring} of an undirected graph $G$ with colors in a set $A$ is a map $f:V(G)\rightarrow A$ where neighbouring vertices are mapped to different colors, or equivalently $f^{-1}(a)$ is an independent set for every $a\in A$. If $|A|=k$ then $f$ is called a \emph{ $k$-coloring}. Analogously, an \emph{(acyclic) $k$-coloring} of a digraph $D$ is a map $f:V(D)\rightarrow A$ with $|A|=k$ where $f^{-1}(a)$ is an acyclic set for every $a\in A$. The minimum $k$ for which a $k$-coloring exists is the \emph{chromatic} (resp. \emph{dichromatic}) \emph{number} of the undirected graph $G$ (resp. digraph $D$), which we shall denote by $\chi(G)$ (resp. $\vec{\chi}(D))$.

\section{Proofs}

\subsection{Strong minors}


The proof of \Cref{thm:cor} will be based on the following result.

\begin{thm}\label{thm:main}
For every digraph $D$ there is an undirected graph $G$ such that 
\begin{enumerate}[(i)]
\item $D$ is a strong $\bivec{G}$-minor model, and 
\item $\vec{\chi}(D)\leq 2\chi(G)$.
\end{enumerate}
\end{thm}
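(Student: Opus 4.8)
The plan is to prove \Cref{thm:main} by induction on $|V(D)|$, the essential step being the contraction of a shortest directed cycle. If $D$ is acyclic then $\vec{\chi}(D)\le 1$ and we take $\mathcal{P}$ to be the partition into singletons and $G$ the edgeless graph on $V(D)$, so that $\vec{\chi}(D)\le 1\le 2=2\chi(G)$; hence assume $D$ contains a directed cycle. (Optionally one may first replace $D$ by its strongly connected components: every directed cycle lies inside one of them, so $\vec{\chi}(D)=\max_i\vec{\chi}(D[S_i])$, and taking the disjoint union of the graphs produced for the $D[S_i]$ settles the general case; the contraction step below does not require this.) Let $C$ be a \emph{shortest} directed cycle in $D$. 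Since $C$ is shortest it is chordless, so $D[V(C)]=C$; in particular $D[V(C)]$ is strongly connected and $\vec{\chi}(D[V(C)])=2$. Let $D':=D/V(C)$ be obtained from $D$ by contracting $V(C)$ into a single vertex $s$; then $|V(D')|<|V(D)|$.

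By induction, $D'$ is a strong $\bivec{G'}$-minor model for some graph $G'$, via a partition $\mathcal{P}'=\{X'_w:w\in V(G')\}$ of $V(D')$, with $\vec{\chi}(D')\le 2\chi(G')$. Let $\mathcal{P}$ be obtained from $\mathcal{P}'$ by replacing the vertex $s$, inside whichever part $X'_{w_0}$ contains it, by the whole set $V(C)$. Because $C$ is strongly connected, the enlarged part $(X'_{w_0}\setminus\{s\})\cup V(C)$ induces a strongly connected subdigraph of $D$, and a direct check shows that the digraph obtained from $D$ by contracting every part of $\mathcal{P}$ is isomorphic to the one obtained from $D'$ by contracting every part of $\mathcal{P}'$: an arc runs between two parts of $\mathcal{P}$ precisely when it runs between the corresponding parts of $\mathcal{P}'$. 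Hence $\mathcal{P}$ partitions $V(D)$ into strongly connected sets, and the graph $G:=G'$ (whose edges are exactly the ``digons'' of this quotient) witnesses item (i).

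For item (ii) we must turn an acyclic $2\chi(G')$-coloring $c'$ of $D'$ into an acyclic $2\chi(G')$-coloring of $D$. Keep $c'$ on $V(D)\setminus V(C)$, pick two distinct colors $\alpha\ne\beta$ that are both \emph{available at $s$} --- meaning that replacing $c'(s)$ by $\alpha$ leaves $c'$ acyclic on $D'$, and likewise for $\beta$ --- and color one vertex of $C$ with $\alpha$ and all remaining vertices of $C$ with $\beta$. Since $C$ is chordless, any directed cycle of $D$ meeting $V(C)$ is not contained in $V(C)$, so contracting $V(C)$ turns such a cycle into a nontrivial closed walk through $s$ in $D'$; inspecting the three color classes (using this observation) shows that the resulting coloring of $D$ is acyclic and uses no new color. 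It remains to see that such $\alpha,\beta$ exist: the color $c'(s)$ is always available at $s$, and if $c'$ omits some color from its palette $\{1,\dots,2\chi(G')\}$ then that color is available at $s$ as well (a monochromatic cycle through $s$ in a color used on no other vertex would be a loop). Hence $\alpha,\beta$ can be found whenever $c'$ may be chosen to use at most $2\chi(G')-1$ colors, that is, whenever $\vec{\chi}(D')<2\chi(G')$.

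The one point that genuinely needs care --- and which I expect to be the main obstacle --- is the tight case $\vec{\chi}(D')=2\chi(G')$. Then every $2\chi(G')$-coloring of $D'$ uses all of its colors, and one must still exhibit an optimal coloring of $D'$ in which $s$ has a second available color, i.e.\ is not ``color-forced''. I expect this to require choosing the shortest cycle $C$ more carefully (for instance so that $s$ has bounded in- and out-degree in $D'$, or so that $\vec{\chi}(D'-s)=\vec{\chi}(D')$), or strengthening the induction hypothesis so that it also carries a suitable coloring with a spare color. The factor $2$ in the statement is exactly the slack this bookkeeping consumes: the $2$-colorable gadget $C$ must be re-inserted using only colors already present in the coloring of the contracted digraph.
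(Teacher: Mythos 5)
Your reduction to the tight case $\vec{\chi}(D')=2\chi(G')$ is not a bookkeeping issue to be cleaned up later --- it is where the approach breaks, and no choice of the shortest cycle $C$ or local recoloring fixes it. The problem is that taking $G:=G'$ commits you to a graph that the induction may simply have chosen too small for $D$. Concretely, run your own procedure on $D=\bivec{K}_3$: the shortest cycle is a digon, contracting it yields $D'=\bivec{K}_2$, another contraction yields a single vertex, and unwinding gives $G'=G=K_1$ with the whole vertex set as one branch set. Then $2\chi(G)=2<3=\vec{\chi}(D)$, and indeed at the last step only one color is available at $s$ in the $2$-coloring of $\bivec{K}_2$, so the recoloring argument is stuck --- exactly the tight case. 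The induction hypothesis ``there exists $G'$'' gives you no control over which $G'$ appears, and the correct $G$ for $D$ may need strictly more vertices and strictly larger chromatic number than any $G'$ that certifies $D'$; a local patch cannot create that extra vertex. Strengthening the hypothesis to guarantee a spare color at $s$ is likewise hopeless as stated, since for $D'=\bivec{K}_2$ with $G'=K_1$ there is genuinely no spare color. To make the induction go through you would essentially have to carry along the structural property that each branch set $X$ satisfies $\vec{\chi}(D[X])\le 2$ and that the part absorbing $V(C)$ remains $2$-dichromatic, at which point you are reconstructing the paper's argument rather than avoiding it.

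For comparison, the paper does not induct on $|V(D)|$ at all. It builds the partition in one pass: greedily choose $X_1,X_2,\dots$ so that each $X_i$ is an inclusion-maximal subset of the remaining vertices with $D[X_i]$ strongly connected and $\vec{\chi}(D[X_i])\le 2$, let $G$ record the digons between parts, and color $D$ by taking a proper $\chi(G)$-coloring of $G$ and splitting each color into a pair used to $2$-color the corresponding parts. The acyclicity of this coloring is then proved by contradiction: a shortest monochromatic directed cycle $C$ would let you enlarge the first part $X_{i_0}$ it meets by an initial segment $w_1,\dots,w_s$ of $C$ while keeping strong connectivity and $2$-dichromaticity, contradicting maximality. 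The greedy construction is what guarantees, globally and from the start, that every part is $2$-dichromatic --- the invariant your contraction step cannot maintain. Your observation that a shortest cycle is chordless, strongly connected, and $2$-dichromatic is exactly the local seed of that idea, but it has to be run as a greedy maximal-set construction, not as a single contraction.
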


\begin{proof}
To start with, let us first fix a partition $X_1,X_2,\dots, X_m$ of $V(D)$ such that for every $i\in \{1,2,\dots,m\}$ the set $X_i$ is an inclusion-wise maximal subset of $V(D)\setminus \left(X_1\cup \cdots \cup X_{i-1}\right)$ with $D[X_i]$ strongly connected and $\vec{\chi}(D[X_i]) \le 2$. Note that the $X_i$'s are well-defined since the one vertex-digraph is strongly connected and $2$-colorable. Now we define $G$ to be the undirected simple graph with vertex set $\{X_1,\dots,X_m\}$ and $X_iX_j\in E(G)$ if and only if there are arcs in both directions between $X_i$ and $X_j$ in $D$. Then, by definition, $D$ is a strong $\bivec{G}$-minor model, as one can simply take $X_1,X_2,\dots,X_m$ as the branch sets. 

Therefore, what remains to prove is property (ii). For this let us assume that $\chi(G)=k$ and fix a proper coloring $f_G:V(G)\rightarrow \{c_1,c_2,....,c_k\}$ of $G$. Now, for every $i$ take an arbitrary acyclic two-coloring of $D[X_i]$ (which exists by assumption) with colors $\{c_i',c_i''\}$. The rest of the proof is about showing that by putting these colorings together we obtain an acyclic coloring $f_D$ of $D$ with the $2k$ colors $\{c_{1}',c_{1}'',c_{2}',c_{2}'',\dots,c_{k}',c_{k}''\}$. 

Assume for contradiction that this is not the case, and there is a directed cycle $C$ in $D$ which is monochromatic. We may, without loss of generality, assume that $C$ is a shortest such cycle, in particular, it is and induced cycle. Let $i_0$ be the smallest index for which $C$ contains a vertex from $X_{i_0}$. Note that, in particular, $V(C)\subseteq V(D)\setminus \left(X_1\cup \cdots \cup X_{i_0-1}\right)$ and, as $f_D$ is a proper coloring on $D[X_{i_0}]$, the cycle $C$ cannot be fully contained in $X_{i_0}$. Hence, $C$ contains a subsequence $u,w_1,\dots,w_\ell,v$ of consecutive vertices on $C$ with $(u,w_1),(w_1,w_2),\dots,(w_\ell,v) \in A(C)$, such that $u,v\in X_{i_0}$ (possibly $u=v$), $w_1,\ldots,w_\ell \in X_{i_0+1} \cup \cdots \cup X_m$, and $\ell>0$. 

Let $s \in \{1,\ldots,\ell\}$ be the smallest index such that $w_{s}$ has an out-neighbour in $X_{i_0}$, and denote this out-neighbor by $x \in X_{i_0}$. We claim that $w_{s}$ has no in-neighbor in $D$ that is contained in $X_{i_0}$. Suppose towards a contradiction that there exists $y \in X_{i_0}$ such that $(y,w_{s}) \in A(D)$. Let $j>{i_0}$ be such that $w_s\in X_j$. Then, because of the arcs $(y,w_{s}) , (w_s,x) \in A(D)$, we have $X_{i_0}X_j\in E(G)$ and hence $f_G(X_{i_0})\neq f_G(X_j)$. This in turn implies that $f_D(u)\neq f_D(w_s)$ and $f_D(v)\neq f_D(w_s)$ which contradicts the monochromaticity of $C$. Hence, we may assume that $w_s$ has no in-neighbor contained in $X_{i_0}$. In particular, this implies $s \ge 2$. Let us now consider the set
\begin{equation*}
 X=X_{i_0}\cup\{w_1,\dots,w_{s}\}\subseteq V(D)\setminus \left(X_1\cup \cdots \cup X_{i_0-1}\right).   
\end{equation*}
 It is clearly strongly connected, as $X_{i_0}$ is so and $u,w_1,\ldots,w_{s},x$ induce a directed path (or cycle in case $u=x$) starting and ending in $X_{i_0}$. Moreover, any extension of an acyclic $\{1,2\}$-coloring of $D[X_{i_0}]$ to a $\{1,2\}$-coloring of $D[X]$ where $w_1,\ldots,w_{s-1}$ receive color $1$ and $w_{s}$ receives color $2$ is acyclic. Indeed, by the definition of $s$, there are no arcs starting in $\{w_1,\dots, w_{s-1}\}$ and ending in $X_{i_0}$, and by the inducedness of $C$ there are no arcs spanned between non-consecutive vertices inside $\{w_1,\dots, w_{s-1}\}$. Adding the fact that $w_s$ has no in-neighbours in $X_{i_0}$, these imply that any directed cycle in $D[X]$ is either fully contained in $D[X_{i_0}]$, or contains both $w_s$ and at least one vertex in $\{w_1,\dots, w_{s-1}\}$. In any case, it is not monochromatic. However, the existence of the set $X$ then contradicts with the maximality of $X_{i_0}$, which finishes the proof.
\end{proof}

Now we can easily deduce \Cref{thm:cor} from \Cref{thm:main}.

\begin{proof}[Proof of  \Cref{thm:cor}.]
Let $D$ be a digraph with $\vec{\chi}(D) \ge 2m_\chi(t)-1$. By Theorem~\ref{thm:main} there exists an undirected graph $G$ such that $\vec{\chi}(D) \le 2\chi(G)$ and $D \succcurlyeq_s \bivec{G}$. This implies that $\chi(G) \ge m_\chi(t)$, and hence $G$ contains a $K_t$-minor. Taking the same branch sets in $\bivec{G}$ which give a $K_t$-minor in $G$ shows that $\bivec{G} \succcurlyeq_s \bivec{K}_t$, and by transitivity $D\succcurlyeq_s \bivec{K}_t$. Since $D$ was arbitrarily chosen such that $\vec{\chi}(D) \ge 2m_\chi(t)-1$, this proves that $sm_{\vec{\chi}}(t) \le 2m_\chi(t)-1$, as required.
\end{proof}

\subsection{Butterfly minors}

\Cref{cor:but} follows directly from Theorem~\ref{thm:cor} and the following proposition.


\begin{prop}
Every strong $\bivec{K}_{2t}$-minor model contains $\bivec{K}_t$ as a butterfly minor.
\end{prop}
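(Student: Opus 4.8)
The plan is to work directly with the $2t$ branch sets. Write $D$ for a strong $\bivec{K}_{2t}$-minor model with branch sets $Y_1,\dots,Y_{2t}$, so that each $D[Y_k]$ is strongly connected and for every ordered pair $k\neq\ell$ there is an arc $a_{k\ell}$ from $Y_k$ to $Y_\ell$, one of which I fix once and for all. I group the branch sets into the $t$ pairs $B_i:=Y_{2i-1}\cup Y_{2i}$, $1\le i\le t$, delete a carefully chosen set of arcs of $D$, and show that afterwards each $B_i$ can be butterfly-contracted to a single vertex while exactly one arc survives in each direction between any two distinct $B_i$ and $B_j$. This exhibits $\bivec{K}_t$ as a butterfly minor.

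The main tool I will isolate is the elementary observation that a spanning out-arborescence collapses under butterfly contractions: if $T$ is a spanning out-arborescence of a subdigraph $H$ of a digraph and one first deletes every arc of $H$ not in $T$ and every arc entering $V(H)$ from outside at a non-root vertex, then contracting the arcs of $T$ from the leaves towards the root collapses $V(H)$ to a single vertex, since at each step the contracted vertex is the head of a unique incoming arc (its parent in $T$); the surviving vertex inherits precisely the arcs leaving $V(H)$ together with the arcs entering the root of $T$. Reversing all arcs yields the dual statement, phrased via a spanning in-arborescence and the ``unique out-neighbour'' rule, where the surviving vertex keeps the arcs entering $V(H)$ together with the arcs leaving the root. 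Since $D[Y_k]$ is strongly connected, it has spanning arborescences of each kind rooted at any prescribed vertex, so both versions are applicable inside every branch set.

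For a fixed pair, $Y_{2i-1}$ will carry the arcs that $B_i$ is to \emph{receive} and $Y_{2i}$ the arcs that $B_i$ is to \emph{send}. Writing $(a_i,b_i):=a_{2i-1,2i}$ with $a_i\in Y_{2i-1}$ and $b_i\in Y_{2i}$, I keep, among the arcs incident with $B_i$, only a spanning in-arborescence $T_i$ of $D[Y_{2i-1}]$ rooted at $a_i$, a spanning out-arborescence $T_i'$ of $D[Y_{2i}]$ rooted at $b_i$, the arc $(a_i,b_i)$ itself, and for each $j\neq i$ the two arcs $a_{2i,2j-1}$ (leaving $Y_{2i}$ towards $Y_{2j-1}$) and $a_{2j,2i-1}$ (entering $Y_{2i-1}$ from $Y_{2j}$), deleting everything else; one checks that these ``keep'' lists are mutually consistent over all pairs. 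Then $(a_i,b_i)$ is the only arc leaving the set $Y_{2i-1}$, and it leaves the root $a_i$, so the in-arborescence version of the tool collapses $Y_{2i-1}$ to a vertex $p_i$ retaining the incoming arcs $a_{2j,2i-1}$ and the single arc to $b_i$; dually, $(a_i,b_i)$ is the only arc entering $Y_{2i}$, entering at the root $b_i$, so $Y_{2i}$ collapses to a vertex $q_i$ retaining the outgoing arcs $a_{2i,2j-1}$ and the single arc from $p_i$. Now $q_i$ is the unique out-neighbour of $p_i$, so the arc $(p_i,q_i)$ is butterfly-contractible, and contracting it merges $B_i$ into one vertex with incoming arcs $\{a_{2j,2i-1}:j\neq i\}$ and outgoing arcs $\{a_{2i,2j-1}:j\neq i\}$. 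Performing this for every $i$ — the steps for different pairs do not interfere, since the contractions inside $B_i$ only identify vertices of $B_i$ — leaves a digraph on $t$ vertices in which, for each $i\neq j$, the arcs $a_{2i,2j-1}$ and $a_{2j,2i-1}$ run in opposite directions between the vertices obtained from $B_i$ and $B_j$; this digraph is $\bivec{K}_t$.

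The point that really needs care — and the reason a pair of branch sets, hence $2t$ of them, is used — is that a strongly connected branch set cannot in general be butterfly-contracted to a single vertex while keeping arbitrary incoming \emph{and} outgoing arcs to the rest of the digraph alive; already $\bivec{K}_2$ with one incoming and one outgoing external arc at each vertex has no contractible arc. Splitting each pair into a ``receiving'' half and a ``sending'' half and gluing the two halves through the two arborescence roots via the single arc $(a_i,b_i)$ is precisely what makes every contraction along the way a legitimate butterfly contraction. The remaining verifications — that the listed arcs are all the arcs incident with each $B_i$, that each collapse proceeds as stated, and that no stray arc or loop is produced — are routine.
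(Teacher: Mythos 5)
Your proof is correct and takes essentially the same approach as the paper: pair the $2t$ branch sets into ``receiving''/``sending'' halves, restrict each half to an in-/out-arborescence linked by a single arc between the roots, keep one arc in each direction between different pairs, and butterfly-contract the arborescence arcs. The only cosmetic differences are that you isolate the arborescence-collapse as a standalone lemma and keep a single linking arc per ordered pair, whereas the paper keeps all $X_i^+\!\to X_j^-$ arcs and contracts all arborescence arcs at once after noting contractibility is preserved.
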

\begin{proof}
 Let $D$ be a strong $\bivec{K}_{2t}$-minor model and let $\{X_1^+, X_1^-,\ldots,X_t^+,X_t^-\}$ be a corres\-ponding partition of $V(D)$ into $2t$ 
 branch sets. 
 In particular, for every $i \in \{1,\ldots,t\}$ there exist $r_i^+ \in X_i^+$ and $r_i^- \in X_i^-$ such that $(r_i^-,r_i^+) \in A(D)$. Since $D[X_i^-]$ and $D[X_i^+]$ are strongly connected digraphs,  there exist\footnote{Such trees can easily be obtained by considering a breadth-first in-search (resp. out-search) starting from $r_i^-$ (resp. $r_i^+$).} oriented spanning trees $T_i^- \subseteq D[X_i^-]$ and  $T_i^+ \subseteq D[X_i^+]$ such that $T_i^-$ is an in-arbores\-cence rooted at $r_i^-$ and $T_i^+$ is an out-arborescence rooted at $r_i^+$. Let us consider the spanning subdigraph $D'$ of $D$ consisting of the arcs contained in 
 \begin{equation*}
     T:=\bigcup_{i=1}^{t}\Big({\{(r_i^-,r_i^+)\} \cup A(T_i^+) \cup A(T_i^-)}\Big),
 \end{equation*}
 as well as all arcs of $D$ starting in $X_i^+$ and ending in $X_j^-$ for 
 $i \neq j$. Then every arc of $D'$ contained in $T$ is either the unique arc in $D'$ emanating from its tail or the unique arc in $D'$ entering its head. It follows that all arcs in $T$ are butterfly-contractible. Note that the contraction of an arc does not affect the butterfly-contractibility of other arcs, hence the digraph $D'/T$, obtained from $D'$ by successively contracting all arcs in $T$, is a butterfly minor of $D$. The vertices of $D'/T$ can be labelled $v_1,\ldots,v_t$, where $v_i$ denotes the vertex corresponding to the contraction of the (weakly) connected component of $D'$ inside $X_i^+ \cup X_i^-$. 
 As $D$ is a strong $\bivec{K}_{2t}$-minor model, by definition of $D'$ for every $(i,j) \in \{1,\ldots,k\}^2$ with $i \neq j$, there exists an arc in $D'$ starting in $X_i^+$ and ending in $X_j^-$. Therefore, $D'/T$ is a butterfly minor of $D$ isomorphic to $\bivec{K}_t$, concluding the proof.
\end{proof}

\subsection{Topological minors}

Finally, we prove \Cref{cor:top}.

\begin{proof}[Proof of \Cref{cor:top}.]
As a first step note that given $n \in \mathbb{N}$, every undirected graph $G$ with minimum degree at least $10.5n>n+6.291\cdot\frac{3}{2}n$ contains every $n$-vertex subcubic graph as a minor. This follows directly from a result of Reed and Wood~\cite{reedwood}, who proved that every graph with average degree at least $n+6.291m$ contains every graph with $n$ vertices and $m$ edges as a minor. 


Let now $D$ be any digraph with $\vec{\chi}(D) \ge 22n$, $F$ a subcubic digraph on $n \ge 2$ vertices and $H$ its underlying undirected subcubic graph.  By Theorem~\ref{thm:main} there exists an undirected graph $G$ such that $D$ is a strong $\bivec{G}$-minor model and $\chi(G) \ge 11n$. In particular, $G$ contains a subgraph of minimum degree at least $11n-1>10.5n$ and hence, by our earlier remark, an $H$-minor. This implies that $\bivec{G}$ contains a strong $\bivec{H}$-minor and hence $D$ does so.
However, as $F \subseteq \bivec{H}$, it also follows that $D$ contains a strong $F$-minor, i.e. a subdigraph $D'$ which is a strong $F$-minor model. Let $\{X_f : f \in V(F)\}$ be a branch set partition of $V(D')$ witnessing this.
Recall that, by definition, for every arc $e=(u_1,u_2) \in A(F)$ there exist vertices $v(e,u_1) \in X_{u_1}$ and  $v(e,u_2) \in X_{u_2}$ such that $\left(v(e,u_1),v(e,u_2)\right) \in A(D')\subseteq A(D)$. 

Let next $u \in V(F)$ be an arbitrary vertex with total degree $d=d(u)\in \{0,1,2,3\}$ and let us denote the arcs incident to $u$ by $e_1,\dots,e_d$. Furthermore, for $i=1,\ldots,d$ we put $v_i:=v(e_i,u)$. We claim that there exists a vertex $b(u) \in X_u$ and for every $i=1,\ldots,d$ a directed path $P_i^u$ in $D[X_u]$ such that
\begin{itemize}
    \item $P_1^u,\dots,P_d^u$ are internally vertex-disjoint;
    \item if $u$ is the tail of $e_i$, then  $P_i^u$ is a directed path from $b(u)$ to $v_i$;
    \item  if $u$ is the head of $e_i$, then $P_i^u$ is a directed path from $v_i$ to $b(u)$.
\end{itemize}
This claim holds trivially if $d=0$, and if $d=1$ then we can simply put $b(u)=v_1$ and let $P_1^u$ be the trivial one-vertex path consisting of $v_1$. 

If $d=2$ then, without loss of generality, by the symmetry of reversing all arcs in $D$ and $F$, we may assume that $u$ is the head of $e_1$. We then can put $b(u):=v_1$, let $P_1^u$ be the trivial one-vertex path consisting of $v_1$, and take $P_2^u$ to be any directed path in in $D[X_u]$ from $v_1$ to $v_2$, which exists by strong connectivity.

Finally suppose $d=3$. Since $F$ is subcubic, $u$ either has in-degree one and out-degree two, or vice versa. As before, without loss of generality, by symmetry we may assume that the first case occurs, and it is $e_1$ that enters $u$ and $e_2$ and $e_3$ that emanate from it. Take now $P_{12}$ and $P_{13}$ to be directed paths in $D[X_u]$ starting at $v_1$ and ending at $v_2$ and $v_3$, respectively. We define now $b(u)$ as the first vertex in $V(P_{12})$ that we meet when traversing $P_{13}$ backwards (starting at $v_3$); $P_1^u$ as the subpath of $P_{12}$ directed from $v_1$ to $b(u)$; $P_2^u$ as the subpath of $P_{12}$ directed from $b(u)$ to $v_2$; and $P_{3}^3$ as the subpath of $P_{13}$ directed from $b(u)$ to $v_3$. It follows by definition that $P_1, P_2, P_3$ are internally vertex-disjoint, and hence the claim follows.

To finish the proof, let $S \subseteq D$ be a subdigraph with vertex set
\begin{equation*}
    V(S):=\bigcup_{u \in V(F)}\left(\bigcup_{i=1}^{d(u)}V(P_i^u)\right),
\end{equation*}
and arcs
\begin{equation*}
    A(S):=\left\{\Big(v(e,u_1),v(e,u_2)\Big)\ \Big|\ e=(u_1,u_2) \in A(F)\right\} \cup \left(\bigcup_{u \in V(F)}\left(\bigcup_{i=1}^{d(u)}A(P_i^u)\right)\right).
\end{equation*}
$S$ is a digraph isomorphic to a subdivision of $F$ in which a vertex $u \in V(F)$ is represented by the branch-vertex $b(u)$. This concludes the proof.
\end{proof}

\section{Concluding remarks}
In this note we showed that $sm_{\vec{\chi}}(t) \le 2m_\chi(t)-1$ and $bm_{\vec{\chi}}(t)  \le 2m_\chi(2t)-1$ for any $t \ge 1$. As far as lower bounds are considered, it is not hard to see that $m_\chi(t) \le \min\{sm_{\vec{\chi}}(t),bm_{\vec{\chi}}(t)\}$ for every $t \ge 1$. Indeed, for any graph $G$ with $\chi(G) \ge \min\{sm_{\vec{\chi}}(t),bm_{\vec{\chi}}(t)\}$, as $\vec{\chi}(\bivec{G})=\chi(G)$, by definition $\bivec{G}$ contains $\bivec{K}_t$ either as a strong minor or as a butterfly minor, each of which implies that $G$ contains a $K_t$-minor.  
Therefore, our results reduce the question about the asymptotics of $sm_{\vec{\chi}}(t)$ and $bm_{\vec{\chi}}(t)$ to the well-studied undirected version of the problem. Also, as Hadwiger's conjecture is known to be true for small values, for $3 \le t \le 6$ we have
$$t+1 \le sm_{\vec{\chi}}(t) \le 2t-1\quad\text{and}\quad t+1 \le bm_{\vec{\chi}}(t) \le 4t-1.$$
We believe that the upper bounds should not be tight. 
To support this intuition, let us mention that a more careful analysis of our proof of Theorem~\ref{thm:cor} yields the stronger statement that any digraph $D$ with $\vec{\chi}(D) \ge 2m_\chi(t)-1$ contains a strong $\bivec{K}_t$-minor model in which between any two branch sets, there are at least two arcs spanned in both directions. Under the assumption that Hadwiger's conjecture is true, the bound $2t-1$ for this stronger property would be sharp, as shown by $\bivec{K}_{2t-2}$. 
This indicates that our proof should not be expected to give a tight bound for the problem of forcing a strong $\bivec{K}_t$-minor. Instead it seems plausible that $sm_{\vec{\chi}}(t)=t+1$ (and maybe $bm_{\vec{\chi}}(t)=t+1$) for any $t \ge 3$.

\begin{problem}
Does every digraph $D$ with $\vec{\chi}(D) \ge t+1$ contain $\bivec{K}_t$ as a strong minor (butterfly minor)?
\end{problem}

Already resolving the first open case $t=3$ would be quite interesting.


\section*{Acknowledgments} We would like to thank Patrick Morris for fruitful discussions on the topic.

\end{document}